\newcommand{\margem}[1]{\marginpar{{\scriptsize {#1}}}}
\DeclareMathAlphabet{\mathpzc}{OT1}{pzc}{L}{it} 
\newtheorem{definition}{Definition}
\newtheorem{proposition}[definition]{Proposition}
\newtheorem{theorem}[definition]{Theorem}
\newtheorem{remark}[definition]{Remark}
\newtheorem{lemma}[definition]{Lemma}
\def\geq{\geqslant}
\def\leq{\leqslant}
\def\R{\mathbb{R}}
\def\T{\mathbb{T}}
\def\N{\mathbb{N}}
\def\Q{\mathbb{Q}}
\def\th{\theta}
\newcommand{\bea}{\begin{eqnarray}}
  \newcommand{\eea}{\end{eqnarray}}
  \newcommand{\beab}{\begin{eqnarray*}}
  \newcommand{\eeab}{\end{eqnarray*}}
  \newcommand{\be}{\begin{equation}}
  \newcommand{\ee}{\end{equation}}
\title{Rigidity times for weakly mixing dynamical system which are not  rigidity times for any irrational rotation}
\author{Bassam Fayad and Adam Kanigowski}
\begin{document}
\baselineskip=14pt \maketitle

\begin{abstract} We construct an increasing sequence of natural numbers $(m_n)_{n=1}^{+\infty}$ with the property that $(m_n \th [1])_{n\geq 1}$ is dense in $\T$  for any $\th \in \R\setminus \Q$, and a continuous measure on the circle $\mu$ such that $\lim_{n\to +\infty}\int_{\T}\|m_n\theta\|d\mu(\theta)=0$. Moreover, for every fixed $k\in \N$,  the set $\{n\in \N:\,k\nmid m_n \}$ is infinite. 

This is a sufficient condition for the existence of a rigid, weakly mixing dynamical system whose rigidity time is not a rigidity time for any system with a discrete part in its spectrum.
\end{abstract}

\section{Introduction}
Let $\T$ denote the circle group with addition $mod1$. For $\eta\in \R$ we denote by $\eta[1]$ the fractional part of $\eta$ and $\|\eta\|$ its distance to integers. It follows that $\|\eta\|=\min(\eta[1],(1-\eta)[1])$. Therefore for any $\eta\in \R$, $\|\eta\|\leq \frac{1}{2}$. 

In this note, we prove the following two results.
\begin{theorem}\label{ext} Fix rationally independent numbers $\{\alpha_i\}_{i\in \N}\in \T.$\footnote{By this we mean that every finite collection is rationally independent.} There exists an increasing sequence $(m_n)_{n=1}^{+\infty}$ such that
$(m_n \th [1])_{n\geq 1}$ is dense in $\T$  for every irrational $\theta$, and for every $\epsilon>0$ and $k\in \N$ there exists $N_0\in \N$ such that for every $n\geq N_0$ we have   $\|m_n\alpha_i\|<\epsilon$ for at least $k-1$ choices of $i\in \{1,...,k\}$. Moreover for every $k\in \N$ the set $\{n\in \N:\,k\nmid m_n \}$ is infinite.
\end{theorem}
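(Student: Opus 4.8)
The plan is to construct $(m_n)$ as the increasing enumeration of a disjoint union $\bigcup_{N}B_N$, where $B_N$ is a finite set contained in $[A_N,A_{N+1})$ for thresholds $A_1<A_2<\cdots$ that we let grow extremely fast. Block $N$ is governed by parameters $\eps_N\downarrow 0$, $k_N\uparrow\infty$ (at whatever slow rate is convenient) and a ``deleted index'' $s_N\in\N$ chosen so that every $j\in\N$ occurs as $s_N$ for infinitely many $N$; we take $B_N$ to be the set of all $m\in[A_N,A_{N+1})$ such that $\|m\alpha_i\|<\eps_N$ for every $i\in\{1,\dots,k_N\}\setminus\{s_N\}$. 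Because the $\alpha_i$ are rationally independent, for every integer $Q$ the rotation of $\T^{k_N-1}$ by $(Q\alpha_i)_{i\le k_N,\ i\ne s_N}$ is minimal; hence in each residue class modulo $Q$ the set of these ``$s_N$-rigid'' times is nonempty and syndetic, with a gap bound $G_{N,Q}$ depending only on $N$, $Q$ and the fixed family $\{\alpha_i\}$. Choosing $A_{N+1}$ large enough (relative to $A_N$, the relevant $G_{N,\cdot}$ and $N!$) we may assume $B_N$ is as long as we need: in particular it contains a member $\equiv 1\pmod{N!}$, and it meets every residue class modulo each modulus that will be relevant below.

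Two of the three assertions are then essentially built in. If $m_n\in B_N$ then $\|m_n\alpha_i\|<\eps_N$ for all $i\le k_N$ except $i=s_N$; hence, given $\eps>0$ and $k$, as soon as $N$ satisfies $k_N\ge k$ and $\eps_N\le\eps$, every $m_n\in B_N$ — and every $m_n$ in a later block — has $\|m_n\alpha_i\|<\eps$ for at least $k-1$ of the indices $i\in\{1,\dots,k\}$, so $N_0$ may be taken to be the first index of the first such block. For the divisibility statement, for each $k$ every block $B_N$ with $N\ge k$ contains a member $\equiv 1\pmod{N!}$, which is not divisible by $k$; there are infinitely many such $N$, so $\{n:k\nmid m_n\}$ is infinite.

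The substantive point is density of $(m_n\theta\,[1])_{n\ge 1}$ for \emph{every} irrational $\theta$. Fix $\theta$, a target $y\in\T$ and $\delta>0$, and let $p_k/q_k$ be a convergent of $\theta$, so $|\theta-p_k/q_k|<1/(q_kq_{k+1})$ and, for $m\equiv b\pmod{q_k}$, $\ m\theta\equiv bp_k/q_k+m(\theta-p_k/q_k)\pmod 1$. Since $\gcd(p_k,q_k)=1$ the points $bp_k/q_k$ ($b\in\Z/q_k\Z$) form a $1/q_k$-net of $\T$, so it suffices to exhibit, for arbitrarily large $k$ and the appropriate residue $b$, a member $m$ of some block with $m\equiv b\pmod{q_k}$ and $m<\delta q_kq_{k+1}/2$. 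This is where the deleted index matters: if $\theta\in\Q+\sum_{\mathrm{finite}}\Q\alpha_i$ we use a block with $s_N$ equal to an index on which $\theta$ genuinely depends (for instance $s_N=j$ when $\theta=\alpha_j$), so that the condition defining $B_N$ imposes nothing on $\|m\alpha_{s_N}\|$; then, since $m\theta\bmod 1$ is in any case pinned down up to $m(\theta-p_k/q_k)$ by $m\bmod q_k$, it is consistent to require $m$ rigid off $s_N$ and $m\equiv b\pmod{q_k}$ for every $b$, and by minimality such $m$ occur with bounded gap along the class $b$, hence already at some $m\le A_N+G_{N,q_k}<A_{N+1}$. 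For $\theta$ rationally independent of the $\alpha_i$ any block works. (One sees here why only ``$k-1$ of $k$'' is attainable: a sequence which were rigid for all of $\alpha_1,\dots,\alpha_k$ for all large $n$ would have $m_n\alpha_j\to 0$ for each $j$, hence could never make $(m_n\alpha_j)$ dense for $\theta=\alpha_j$.)

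The main obstacle is to make this last step quantitative and uniform over all irrational $\theta$: one must show that for every $\theta$ there are infinitely many convergent denominators $q_k$ that are ``accessible'' in the sense that, for a suitable block $N$, both $A_N\le\delta q_k^2/4$ and $G_{N,q_k}\le\delta q_k^2/4$, so that the witnessing $m$ has size $\lesssim q_k^2<\delta q_kq_{k+1}/2$ (using $q_{k+1}\ge q_k$). The gap $G_{N,q_k}$ has the expected order $q_k(2\eps_N)^{-(k_N-1)}$ except when $q_k$ is resonant with some $\alpha_i$ (i.e.\ $\|q_k\alpha_i\|$ anomalously small), and it is this interaction — between the continued fraction of an arbitrary $\theta$ and the Diophantine behaviour of the fixed family $\{\alpha_i\}$ — that must be controlled. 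The available levers are: freedom to let $A_N$ grow faster than any prescribed function of $\eps_N$, $k_N$ and the $\alpha_i$; freedom to let $s_N$ visit each index infinitely often, so that no particular deleted index is ever forced; and freedom to make $k_N$ grow arbitrarily slowly (which keeps both the return times and the chance of resonance small). Carrying out this Diophantine bookkeeping — in particular verifying accessibility of enough convergents for every $\theta$, including Liouville $\theta$ and $\theta$ resonant with the $\alpha_i$ — is the technical heart of the construction.
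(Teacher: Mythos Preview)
Your block structure with a ``deleted index'' $s_N$ is close in spirit to what the paper does: the paper builds, for each $i$, a sequence $(s^{(i)}_n)$ that is rigid for every $\alpha_j$ with $j\neq i$ and then interleaves these diagonally, so the two constructions agree at the combinatorial level, and your verification of the ``$k-1$ out of $k$'' and the non-divisibility clauses is fine.

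The gap is exactly where you say it is. Density of $(m_n\theta[1])$ for \emph{every} irrational $\theta$ is the whole content of the theorem, and your continued-fraction scheme does not prove it: you need, for every $\theta$ and every $\delta>0$, infinitely many pairs $(N,k)$ with $A_N+G_{N,q_k}<\min(A_{N+1},\,\tfrac{\delta}{2}q_kq_{k+1})$, and you give no mechanism to produce such pairs. The levers you list (fast growth of $A_N$, slow growth of $k_N$, freedom in $s_N$) do not touch the real obstruction, which is that $G_{N,q}$ can be astronomically large for a single bad modulus $q$ (any $q$ with $\|q\alpha_i\|$ tiny for some $i\le k_N$, $i\neq s_N$), and nothing prevents all sufficiently large convergent denominators of a given $\theta$ from being bad in this sense for the block they would have to land in. Since the $A_N$ are fixed before $\theta$ is, you cannot enlarge $A_{N+1}$ after the fact to swallow $G_{N,q_k(\theta)}$; and since $\theta$ is arbitrary, its convergent denominators can be arranged (by choosing $\theta$) to hit the resonant moduli of your blocks. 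So as written this is a plan, not a proof.

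The paper avoids this Diophantine bookkeeping entirely. The key device is a Fourier-analytic lemma (a multi-frequency version of a lemma of Fayad--Thouvenot): if for every $m\in[N_1,N']$ with $\|m\alpha_i\|<\eps$ ($i=1,\dots,k$) the point $m\theta[1]$ avoids a fixed interval of length $1/l$, then $\theta$ lies within $v$ of $\tfrac{1}{l'}\sum r_i\alpha_i$ for some bounded $|l'|<L(l)$ and $|r_i|<K(k,\eps)$. Iterating this over blocks with $k\to\infty$, $\eps\to 0$, $v\to 0$ forces any $\theta$ for which $(s_n\theta[1])$ is not dense to lie in $\Q+\Q\alpha_1+\Q\alpha_2+\cdots$. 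Thus density for $\theta$ outside this set follows with no control whatsoever on continued fractions of $\theta$ or on gaps $G_{N,q}$; the exceptional $\theta$'s (rational combinations of the $\alpha_i$) are then handled by the deleted-index trick, exactly as you do. If you want to repair your argument, the cleanest fix is to replace your continued-fraction step by this lemma.
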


\begin{theorem}\label{mes} Fix rationally independent numbers $\{\alpha_i\}_{i\in \N}\in \T$ and let $(m_n)_{n\geq 1}$ be the corresponding sequence from Theorem \ref{ext}. There exists a continuous probability measure $\mu$ on $\T$ such that $$\lim_{n\to +\infty}\int_{\T}\|m_n\theta\|d\mu(\theta)=0.$$
\end{theorem}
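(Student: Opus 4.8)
\emph{Proof idea.} The plan is to reduce the statement to producing a continuous probability measure with $\widehat\mu(m_n)\to 1$, and to realise such a $\mu$ as an infinite convolution supported on the ``rigidity lattices'' of the $\alpha_i$ that are already built into the $m_n$. Since $4\|x\|\leq|1-e^{2\pi\i x}|\leq 2\pi\|x\|$ for every $x\in\R$, for any probability measure $\mu$ on $\T$ we have $\int_\T\|m_n\theta\|\,d\mu(\theta)\leq\tfrac12\bigl(\int_\T|1-e^{2\pi\i m_n\theta}|^2\,d\mu\bigr)^{1/2}=\tfrac12\bigl(2-2\,\Re\,\widehat\mu(m_n)\bigr)^{1/2}$, and conversely $2-2\,\Re\,\widehat\mu(m_n)=\int_\T|1-e^{2\pi\i m_n\theta}|^2\,d\mu\leq 2\pi\int_\T\|m_n\theta\|\,d\mu$. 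Thus $\int_\T\|m_n\theta\|\,d\mu\to0$ is equivalent to $\widehat\mu(m_n)\to1$, and it is the latter property that I will arrange.

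Recall from the construction behind Theorem \ref{ext} that the $m_n$ are produced from a rapidly increasing \emph{nested} sequence of integers $q_1\mid q_2\mid\cdots$, where $q_j$ is a common denominator of very good rational approximations to $\alpha_1,\dots,\alpha_{N_j}$ ($N_j\to\infty$), and that each $m_n$ is a small multiple of one of them, $m_n=c_n\,q_{j(n)}$, with $j(n)\to\infty$ and, for each fixed $j$, the set $\{c_n:\,j(n)=j\}$ bounded, say by $C_j$. Choose integers $L_j\to\infty$ and impose a strong lacunarity condition, e.g.\ $q_{j+1}\geq q_j^2L_j^2C_j^2$ (this costs nothing, since the $q_j$, the $L_j$ and the $m_n$ are chosen together). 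Let $\nu_j$ be the uniform probability measure on the $L_j$ points $\{0,1/q_j,\dots,(L_j-1)/q_j\}\subset\T$ and set $\mu:=\nu_1*\nu_2*\cdots$; equivalently, $\mu$ is the law of $\Theta=\sum_{j\geq1}a_j/q_j\pmod1$ with the $a_j$ independent, $a_j$ uniform on $\{0,\dots,L_j-1\}$ (the series converges since $\sum_j L_j/q_j<\infty$). Then $\widehat\mu(m)=\prod_j\widehat\nu_j(m)=\prod_j\frac1{L_j}\sum_{a=0}^{L_j-1}e^{-2\pi\i ma/q_j}$, an absolutely convergent product for each fixed $m$.

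\emph{Continuity.} For every $y\in\T$ and every $J$, the tail $\rho_J:=\nu_{J+1}*\nu_{J+2}*\cdots$ satisfies $\max_z\rho_J(\{z\})\leq\max_z\nu_{J+1}(\{z\})=1/L_{J+1}$, so $\mu(\{y\})\leq\max_z\rho_J(\{z\})\leq 1/L_{J+1}$; letting $J\to\infty$ gives $\mu(\{y\})=0$, hence $\mu$ is continuous. \emph{Rigidity.} For $j\leq j(n)$ we have $q_j\mid q_{j(n)}\mid m_n$, so $\widehat\nu_j(m_n)=1$; for $j>j(n)$ a Taylor estimate gives $|1-\widehat\nu_j(m_n)|\leq C\,L_j\,m_n/q_j$ (the argument $m_n/q_j$ being tiny there), whence
\[
|1-\widehat\mu(m_n)|\ \leq\ \sum_{j>j(n)}\bigl|1-\widehat\nu_j(m_n)\bigr|\ \leq\ C\sum_{j>j(n)}L_j\,\frac{m_n}{q_j}\ \longrightarrow\ 0\qquad(n\to\infty),
\]
since $m_n=c_nq_{j(n)}\leq C_{j(n)}q_{j(n)}$ and $q_{j+1}\geq q_j^2L_j^2C_j^2$ make this a convergent geometric-type sum whose leading term $O\bigl(L_{j(n)+1}m_n/q_{j(n)+1}\bigr)$ tends to $0$ (using also $C_j\to\infty$). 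Hence $\widehat\mu(m_n)\to1$, and therefore $\int_\T\|m_n\theta\|\,d\mu\to0$, which is the assertion of Theorem \ref{mes}.

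\emph{The main obstacle.} What I have quietly used as input is exactly where the work lies: that the sequence $(m_n)$ of Theorem \ref{ext} can be produced in the shape $m_n=c_nq_{j(n)}$ with $j(n)\to\infty$ and $\sup\{c_n:\,j(n)=j\}=:C_j<\infty$, \emph{simultaneously} with the lacunarity $q_{j+1}\geq q_j^2L_j^2C_j^2$ being satisfiable, with $(m_n\theta[1])$ dense for every irrational $\theta$, and with $\{n:\,k\nmid m_n\}$ infinite for every $k$. Reconciling the constraint ``$c_n<L_{j(n)}$'' (forced by $\widehat\mu(m_n)\to1$) with the richness the $c_n$ must display inside each block $j$ --- enough distinct values, and coprimality to any prescribed $k$ infinitely often, yet all staying below $L_j$ --- is the heart of the matter, and is why Theorems \ref{ext} and \ref{mes} are proved in tandem rather than the second being a formal consequence of the bare statement of the first.
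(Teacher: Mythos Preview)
Your proposal rests on a structural assumption about $(m_n)$ that is neither present in the paper's construction nor compatible with the conclusion of Theorem~\ref{ext}. You assert that the $m_n$ come from a nested divisibility chain $q_1\mid q_2\mid\cdots$ with $m_n=c_n\,q_{j(n)}$ and $j(n)\to\infty$. But then, as soon as some $q_J\geq 2$ (which must happen, since you need $q_j\to\infty$ for the Riesz product to be nontrivial), we would have $q_J\mid q_{j(n)}\mid m_n$ for all sufficiently large $n$, so that $\{n:q_J\nmid m_n\}$ is finite. This flatly contradicts the last clause of Theorem~\ref{ext}, which requires $\{n:k\nmid m_n\}$ to be infinite for \emph{every} $k$. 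In other words, your Riesz-product scheme forces $(m_n)$ to be a rigidity sequence for the rational rotation by $1/q_J$, which is exactly what the whole construction is designed to avoid. Your closing ``main obstacle'' paragraph senses a tension here but underestimates it: it is not a matter of reconciling constraints on the coefficients $c_n$, it is an outright incompatibility between the divisibility chain and the non-rational-rigidity requirement. (Separately, the paper's actual $(m_n)$ are built by taking \emph{all} integers $s$ in a window with $\|s\alpha_t\|$ small for $t\leq i$, and then interleaving several such sequences; no common denominators $q_j$ appear.)

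The paper's proof proceeds quite differently and uses only the ``small for all but one index'' property that Theorem~\ref{ext} actually delivers (cf.\ Remark~\ref{prem}). It builds $\mu$ as a weak limit of discrete measures $\mu_p=\frac{1}{2^p}\sum_{i=1}^{2^p}\delta_{k_i\alpha_i}$, choosing the multipliers $k_i\in\N$ inductively so that each new atom $k_{2^p+s}\alpha_{2^p+s}$ lies very close to the earlier one $k_s\alpha_s$ (this nesting forces continuity of the limit), while the quantities $\mu_p^n=\int\|m_n\theta\|\,d\mu_p$ stay below thresholds $2^{-(j-1)}$ on successive blocks $n\in[N_j,N_{j+1}]$. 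The single exceptional index $i$ for which $\|m_n\alpha_i\|$ may be large contributes at most $\tfrac12\cdot\tfrac{1}{2^p}$ to $\mu_p^n$, which is exactly why the scheme closes up without any arithmetic or divisibility hypothesis on the $m_n$ themselves.
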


Theorem \ref{ext} gives us an increasing sequence of natural numbers $(m_n)_{n=1}^{+\infty}$ which is not a rigidity time for any system with a discrete part in its spectrum. Indeed, if the system has an irrational eigenvalue then it has the irrational rotation as a factor. If it has a rational eigenvalue then it has a shift on a finite group as a factor.  But a rigidity time for a dynamical system is also a rigidity time for its factors, and a sequence as in Theorem \ref{ext} cannot be a rigidity sequence for any rational or  irrational rotation.

 From Theorem \ref{mes}, by the Gaussian measure space construction (see \cite{Co-Fo-Si}), we deduce that there exists a weakly mixing dynamical system whose rigidity times contain  the constructed sequence $(m_n)_{n=1}^{+\infty}$. This gives a full answer to the question stated in \cite{BJLR} of whether a rigidity times sequence of  a system with discrete spectrum is a rigidity time for some weakly mixing and conversely whether  a rigidity times sequence of  a system with continuous spectrum is a rigidity times sequence  for some discrete spectrum system. The first direction was established in \cite{Ad}  and later in \cite{Fa-Tho}, namely, any rigidity time of a system with discrete spectrum is also a rigidity time for some weakly mixing dynamical system.

\indent Our approach is inspired by the completely spectral approach adopted in  \cite{Fa-Tho}. First we prove the existence of a sequence $m_n$ which is not a rigidity time for any circle rotation, but still satisfies that $\| m_n \alpha_i \|$ is small  for most of the indices $i$ of a family of   rationally independent numbers $\{\alpha_i\}_{i\in \N}\in \T$ (see precise statement in Theorem \ref{ext}).

 This allows to construct a continuous probability measure on $\T$, that is a weak limit of discrete measures each supported on some finite set connected with the numbers $\alpha_1,\alpha_2,\ldots$, with a Fourier transform converging to $1$ along this sequence.  \\
\indent The auhors would like to thank to Jean-Paul Thouvenot  for his meaningful input in solving this problem. 

\section{Proof of Theorem \ref{ext}}

Let there be given a family of rationally independent numbers $\{\alpha_i\}_{i\in \N}\in \T$.
We will first state a lemma, which is a generalisation of Lemma 1 in \cite{Fa-Tho}.
\begin{definition}\em{\cite{Fa-Tho} For an interval $I\subset \T$ and fixed $\epsilon>0$ one says that $\theta\in \mathcal{A}(N_1,N_2,\epsilon,I,k)$ if for every $m\in [N_1,N_2]$ such that $\|m\alpha_i\|<\epsilon$ for $i=1,...,k$, we have  $m\theta[1]\notin I$.}
\end{definition}

\begin{lemma}\label{lem} For every $l\geq 2$ there exists $L(l)\in \N$ such that for every $0<\epsilon<\frac{1}{2l^2}$, for every $v>0$, for every $k$, there exist $K(\epsilon)\in \N$ and $N'=N'(l,\epsilon,v,N,k)\in \N$ such that $\theta \in \mathcal{A}(N_1,N',I,\epsilon,k)$ 
for some interval $I$ of size $\frac{1}{l}$, implies that $\|\sum_{i=1}^kr_i\alpha_i-l'\theta\|<v$ for some $|r_1|,...,|r_k|<K(k,\epsilon)$ and some $|l'|<L(l)$.
\end{lemma}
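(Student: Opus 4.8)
\emph{Proof plan.} The plan is to prove the contrapositive by a Weyl-sum estimate on $\T^{k+1}$, exploiting the key point that the \emph{only} obstruction to equidistribution of the orbit $m\mapsto(m\bar\alpha,m\theta)$ of the origin, where $\bar\alpha:=(\alpha_1,\dots,\alpha_k)$, at the scales we care about is precisely the relation asserted by the conclusion; the case $k=1$ should specialise to Lemma~1 of \cite{Fa-Tho}. Throughout write $e(t)=e^{2\pi\i t}$. Fix $l\geq2$. First I would choose two non-negative trigonometric polynomials. On the circle factor, let $f_2$ be a translate, to the midpoint of $I$, of a fixed Fej\'er-type kernel of degree $<L(l)$, normalised so $\int_\T f_2=1$, with $\|f_2\|_\infty=:M_2$ and $f_2(y)<\kappa$ for $y\notin I$; here $L(l)$, $M_2$ and $\kappa=\kappa(l)$ depend only on $l$, and $\kappa$ can be made as small as we please by enlarging $L(l)$. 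On $\T^k$, let $f_1$ be a product of Jackson-type kernels concentrated at $0$, of degree $<K$ in each variable, normalised so $\int_{\T^k}f_1=1$, with $f_1(x)<\eta$ for $x\notin(-\epsilon,\epsilon)^k$; once the smoothing exponent is taken large enough in terms of $k$ (so the decay off the box beats the growth of the sup-norm of the remaining $k-1$ factors), $\eta$ can be made as small as we please by enlarging $K=K(l,\epsilon,k)$. Put $F:=f_1\otimes f_2$ on $\T^{k+1}$: then $F\geq0$, the inequality $F(m\bar\alpha,m\theta)>0$ forces $\|m\alpha_i\|<\epsilon$ for all $i\leq k$ and $m\theta[1]\in I$, and the Fourier support of $F$ lies in $\{(r_1,\dots,r_k,l'):|r_i|<K,\ |l'|<L(l)\}$.

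Now the dichotomy. Either there exist integers with $|r_i|<K$ and $0<|l'|<L(l)$ such that $\|\sum_i r_i\alpha_i-l'\theta\|<v$, which is exactly the conclusion, or no such relation exists. In the second case every nonzero Fourier mode of $F$ is non-resonant: a mode $(r,l')$ with $l'\neq0$ is so because applying the negated inequality to $(r,-l')$ gives $\|r\cdot\bar\alpha+l'\theta\|\geq v$, while a mode with $l'=0$, $r\neq0$ is so because $\|r\cdot\bar\alpha\|\geq\delta_0>0$, where $\delta_0:=\min\{\|\sum_i r_i\alpha_i\|:0<\max_i|r_i|<K\}$ is positive since $\alpha_1,\dots,\alpha_k$ are rationally independent. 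Put $v':=\min(v,\delta_0)$; then $\|r\cdot\bar\alpha+l'\theta\|\geq v'$ for every nonzero Fourier mode of $F$, and likewise for every nonzero mode of $f_1$.

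Assume this second case and that $\theta\in\mathcal A(N_1,N',I,\epsilon,k)$ for some arc $I$ of length $1/l$, and estimate $\Sigma:=\frac1{N'-N_1+1}\sum_{m=N_1}^{N'}F(m\bar\alpha,m\theta)$ from below and above. For the lower bound, expand $\Sigma$ over the Fourier modes of $F$: the $(0,0)$ mode contributes $\hat F(0,0)=1$, while each nonzero mode contributes in absolute value at most $\frac{|\hat F(r,l')|}{N'-N_1+1}\cdot\frac1{2v'}$, by the elementary bound $|\sum_{m=N_1}^{N'}e(m\beta)|\leq\frac1{2\|\beta\|}$ applied with $\|\beta\|\geq v'$; summing, $\Sigma\geq1-\frac{\|f_1\|_A\|f_2\|_A}{2v'(N'-N_1+1)}$, where $\|\cdot\|_A$ is the sum of absolute values of Fourier coefficients, and this exceeds $\tfrac12$ once $N'-N_1$ is large in terms of $l,\epsilon,v,k$. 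For the upper bound, split $[N_1,N']$ according to whether $m\in\mathcal M:=\{m:\|m\alpha_i\|<\epsilon,\ i\leq k\}$: on $m\notin\mathcal M$ one has $f_1(m\bar\alpha)<\eta$, and on $m\in\mathcal M$ the assumption $\theta\in\mathcal A$ forces $m\theta[1]\notin I$, hence $f_2(m\theta)<\kappa$; together with the one-variable Weyl bound $\frac1{N'-N_1+1}\sum_{m=N_1}^{N'}f_1(m\bar\alpha)\leq2$ (valid for $N'-N_1$ large in terms of $\epsilon,k$, using $\|r\cdot\bar\alpha\|\geq\delta_0$ on the nonzero modes of $f_1$) this gives $\Sigma<2\kappa+\eta M_2$, which is $<\tfrac12$ once $\kappa$, and then $\eta$, are chosen small in terms of $l,\epsilon,k$. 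The two bounds on $\Sigma$ are incompatible, so in this case $\theta\notin\mathcal A(N_1,N',I,\epsilon,k)$ for any such arc; hence the conclusion of the lemma holds.

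Taking $N'=N'(l,\epsilon,v,N_1,k)$ larger than all the above thresholds completes the argument, and by construction none of the thresholds, nor $K$, nor $L(l)$, depends on $\theta$ or on the particular arc $I$, since $f_2$ is merely translated. The step I expect to require the most care is the construction of $f_1$: a single Fej\'er kernel on $\T^k$ does not have $\eta\to0$ as its degree grows when $k\geq3$, because the sup-norm of the remaining $k-1$ factors grows, so one must use a high enough power (a Jackson-type kernel), with exponent depending on $k$, for the polynomial off-box decay to win; the rest is bookkeeping of constants so the stated dependencies come out. The hypothesis $\epsilon<\frac1{2l^2}$ is far more than is needed for the two scales to be compatible and plays no further role in this argument.
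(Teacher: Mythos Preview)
Your argument is correct and follows essentially the same route the paper indicates (by reference to \cite{Fa-Tho}): a Weyl-sum/Fourier argument on $\T^{k+1}$ using a tensor product of a trigonometric polynomial on $\T^k$ concentrated near the origin (the paper's $\phi^k_\epsilon$, your $f_1$) and a polynomial on $\T$ of degree $<L(l)$ adapted to the arc $I$ (the paper's $\varphi_l$, your $f_2$). The only difference is a cosmetic duality in the choice of the $\theta$-factor: the paper takes $\varphi_l$ with zero constant term, $\varphi_l>1$ off $I$ and $|\varphi_l|<l^2$ on $\T$, so that the Weyl average is close to $0$ under non-resonance but bounded \emph{below} when $\theta\in\mathcal A$; you take a non-negative $f_2$ with $\int f_2=1$ concentrated on $I$, so the average is close to $1$ under non-resonance but bounded \emph{above} when $\theta\in\mathcal A$. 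Either way the contradiction closes with $L(l)$ depending only on $l$, which is precisely the point the paper singles out. Your remark that the Jackson exponent in $f_1$ must grow with $k$ is exactly the care needed to make $\eta\to0$ beat $\|f_1\|_\infty$ in the product.
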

The proof is a repetition of the proof of Lemma 1  in \cite{Fa-Tho}.  Instead of considering $\phi_\epsilon:\T\to\R$ one needs to consider $\phi^k_\epsilon:\T^k\to \R$. It follows by the proof that the number $L(l,\{\alpha_i\}_{i=1}^k)$ does not depend on the numbers $\{\alpha_i\}_{i=1}^k$ and that is why we just had $L(l)$ in the statement. Indeed, similarly to Lemma 1 in \cite{Fa-Tho}, one considers a polynomial $\varphi_l:\T\to\R$, $\varphi_l(y):=\sum_{0<|k|<L(l)}\hat{\varphi}_k e^{i2\pi ky}$, where $L(l)$ is such that 
\begin{itemize}
    \item $\varphi_l(y)>1$ for every $y\notin [0,\frac{1}{l}]$
	 \item $|\varphi_l(y)|<l^2$ for every $y\in \T$.
\end{itemize}
Therefore $L(l)\in \N$ does not depend on $\{\alpha_i\}_{i=1}^k$. 

\begin{remark}\label{den}{\em Consider an ergodic rotation $T:T^j\to T^j$, $T(x_1,...,x_j)=(x_1+\gamma_1,...,x_j+\gamma_j)$, for $\gamma_1,...,\gamma_j\in \T$. It follows that for every $k\in \N$ and every $\epsilon>0$, there exist (infinitely many) $m\in \N$ such that $\|m\gamma_i\|<\epsilon$ for $i=1,...,j$ and $k\nmid m$. Indeed, for every fixed $k\in \N$ there exist a sequence $(r_n)_{n\geq 1}$ such that $T^{r_n}(0)\to \frac{1}{k}T(0)$.}    
\end{remark} 
\begin{proposition}\label{inf} Fix rationally independent numbers $\{\alpha_i\}_{i\in \N}\in \T$. There exists a sequence $(s_n)$ such that $\lim_{n\to+\infty}\|s_n\alpha_i\|=0$ for $i=1,...$ and $(s_n\theta[1])_{n\geq 1}$ is dense in $\T$ if and only if $\theta\notin \Q+\Q\alpha_1+...$\footnote{By this we mean that there does not exist $n_0$ such that $\theta\in \Q+\Q\alpha_1+...+\Q\alpha_{n_0}$.}. 
\end{proposition}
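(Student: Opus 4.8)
The plan is to build the sequence $(s_n)$ by diagonalizing over the coordinates $\alpha_1,\dots,\alpha_j$ while simultaneously spreading the images $s_n\theta[1]$ across a fixed finite net of $\T$. Concretely, for each $j$ consider the rotation $R_j$ on $\T^j$ by $(\alpha_1,\dots,\alpha_j)$, which by rational independence is ergodic (minimal). By Remark \ref{den}, or simply by minimality, for every $\epsilon>0$ there are infinitely many $m$ with $\|m\alpha_i\|<\epsilon$ for $i=1,\dots,j$; moreover the set of such $m$ is syndetic. I would enumerate a countable dense set $\{p_\ell/q_\ell\}_{\ell\geq 1}\subset\T$ and, at stage $n$ (with $j=j(n)\to\infty$ and $\epsilon=\epsilon(n)\to 0$ chosen along the diagonal), invoke Lemma \ref{lem} in its contrapositive form: if no valid $m$ in a long block $[N_1,N']$ with $\|m\alpha_i\|<\epsilon$ for $i=1,\dots,k$ sent $\theta$ into an interval $I$ of size $1/l$, then $\|\sum_{i=1}^k r_i\alpha_i - l'\theta\|<v$ for bounded $r_i,l'$, i.e.\ $\theta$ lies $v$-close to the finite set $\{\tfrac{1}{l'}\sum r_i\alpha_i\}$. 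Letting $v\to 0$ and $l,k\to\infty$ this forces $\theta\in\Q+\Q\alpha_1+\dots+\Q\alpha_{n_0}$ for some $n_0$.

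The two directions are then handled as follows. For the ``only if'' direction: suppose $\theta=\tfrac{a}{b}+\sum_{i=1}^{n_0}\tfrac{a_i}{b_i}\alpha_i$. For any sequence $(s_n)$ with $\|s_n\alpha_i\|\to 0$ for all $i$, take a common denominator $D$ of $\tfrac{a}{b},\tfrac{a_i}{b_i}$; along the syndetic-type subsequence forced by the construction one can arrange $s_n\equiv c \pmod D$ for a fixed residue $c$ (or treat finitely many residues), and then $s_n\theta[1]\to \tfrac{ca}{b}+\sum \tfrac{c a_i}{b_i}\alpha_i$ modulo the contributions $s_n(\alpha_i - (\text{something}))$ — more cleanly, $s_n\theta[1]$ is within $o(1)$ of the finite set $\{\tfrac{c'a}{b}+\sum\tfrac{c'a_i}{b_i}\alpha_i : 0\le c'<D\}$, which is not dense in $\T$. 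So density of $(s_n\theta[1])$ fails, proving that density implies $\theta\notin\Q+\Q\alpha_1+\dots$. For the ``if'' direction: given $\theta\notin\Q+\Q\alpha_1+\dots$, I run the diagonal construction above. At stage $n$ I want some $m=s_n$ with $\|m\alpha_i\|<\epsilon(n)$ for $i\le j(n)$ AND $m\theta[1]$ in the $\tfrac1{l(n)}$-interval around $p_n/q_n$; Lemma \ref{lem} guarantees this exists in $[N_1,N'(l,\epsilon,v,N,k)]$, because otherwise $\theta$ would be $v$-close to the forbidden finite set, and since $\theta\notin\Q+\Q\alpha_1+\dots$ we may choose $v$ small enough (depending on the fixed $l,k$ at this stage, using that $\theta$ has positive distance to the finite set $\{\tfrac1{l'}\sum r_i\alpha_i : |r_i|<K,|l'|<L(l)\}$) to get a contradiction. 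Taking $l(n)\to\infty$, $\epsilon(n)\to 0$, $j(n)\to\infty$ and sweeping a dense set of centers $p_n/q_n$ yields $\|s_n\alpha_i\|\to 0$ for every $i$ and $(s_n\theta[1])$ dense in $\T$.

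The main obstacle is the bookkeeping in the ``if'' direction: Lemma \ref{lem} is stated for a \emph{fixed} $\theta$, $l$, $\epsilon$, $k$, and it only produces a point $m$ in a block provided $\theta$ is not $v$-close to the finite set $F(l,k,\epsilon)=\{\tfrac1{l'}\sum_{i=1}^k r_i\alpha_i : |r_i|<K(k,\epsilon),|l'|<L(l)\}$. To apply it I must know that $\dist(\theta,F(l,k,\epsilon))>0$, which is exactly where the hypothesis $\theta\notin\Q+\Q\alpha_1+\dots+\Q\alpha_k$ enters (each element of $F$ lies in that set, and the set is finite, so the distance is a strictly positive number I can take as $v$). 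One must be careful that $v$ is chosen \emph{after} $l,k,\epsilon$ but \emph{before} extracting $N'$, so the logical order of quantifiers in Lemma \ref{lem} is respected; the statement is arranged precisely so that $N'$ may depend on all of $l,\epsilon,v,N,k$. A secondary technical point is ensuring the produced $m$'s form an \emph{increasing} sequence — handled by always searching in a block $[N_1,N']$ with $N_1$ larger than the previously chosen $s_{n-1}$, which is legitimate since the $\epsilon$-good $m$ for the rotation $R_j$ form an (infinite, indeed syndetic) set and Lemma \ref{lem}'s block can be started arbitrarily far out.
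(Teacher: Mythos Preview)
There is a genuine gap: the proposition asks for a \emph{single} sequence $(s_n)$, depending only on $\{\alpha_i\}$, such that for \emph{every} $\theta$ the orbit $(s_n\theta[1])$ is dense iff $\theta\notin\Q+\Q\alpha_1+\dots$. In your ``if'' direction you choose $s_n$ so that $s_n\theta[1]$ lands in a prescribed interval about $p_n/q_n$ \emph{for the given} $\theta$; the parameter $v$ you feed into Lemma~\ref{lem} is essentially $\dist(\theta,F(l,k,\epsilon))$, so the block $[N_1,N']$ and hence the integer $s_n$ depend on $\theta$. What you actually construct is a $\theta$-dependent sequence $(s_n^\theta)$, which is a strictly weaker statement and not what the proposition (and its use in the proof of Theorem~\ref{ext}) requires.

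The paper avoids this by taking $(s_n)$ to be \emph{all} integers $m$ in successive blocks $[N_i,N_{i+1}]$ with $\|m\alpha_t\|<\epsilon_i$ for $t\leq i$ --- a $\theta$-free definition. Then if $(s_n\theta[1])$ misses an interval $I$ of size $1/l_s$, one has $\theta\in\mathcal{A}(N_n,N_{n+1},I,\epsilon_n,n)$ for every large $n$, and Lemma~\ref{lem} yields relations $\|\sum_{i\leq n} h_i^n\alpha_i-L!\theta\|<L!v_n$ at every scale $n$. The second idea you are missing is how to extract a \emph{fixed} finite relation from these: the paper sets $v_n=\tfrac{1}{n}\inf_{|k_i|\leq K_{n+1}}\|\sum_{i\leq n+1}k_i\alpha_i\|$ in advance (still $\theta$-free), so the triangle inequality makes consecutive approximants differ by less than $2L!v_n$ and hence coincide for large $n$. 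Your sentence ``letting $v\to 0$ and $l,k\to\infty$ forces $\theta\in\Q+\Q\alpha_1+\dots+\Q\alpha_{n_0}$'' glosses over exactly this stabilisation; without it the growing number of terms in the approximants only places $\theta$ in the closure of the rational span, which is all of $\T$.
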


\begin{proof} $ \ $

 We will use Lemma \ref{lem} for $k=1,2,...$.  Define for $n\geq 1$ the sequence $l_n=n+1$. 
 Let $\epsilon_n=\frac{1}{2(n+1)^2}$ and $K_n:=K(n,\epsilon_n)$.
  Define $v_n=\frac{1}{n}\inf_{0\leq |k_1|,...,|k_{n+1}|\leq K_{n+1}}\|\sum_{i=1}^{n+1}k_i\alpha_i\|$. Take $N_0=0$ and apply Lemma \ref{lem}  with $k=1,l=l_1,\epsilon=\epsilon_1, N=N_0,v=v_1$. Denote $N_1=N'(l_1,\epsilon_1,v_1,N_0,1)$. We apply Lemma \ref{lem}  inductively for $k=n, l=l_n, \epsilon=\epsilon_n,N=N_n,v=v_n$ and choose $N_{n+1}>N'(l_n,\epsilon_n,v_n,N_n,n)$ sufficiently large. Then we define an increasing sequence $(s_n)_{n=1}^{+\infty}$ by taking, for every $i\in \N$, all integers $s\in [N_i,N_{i+1}]$ such that $\|s\alpha_t\|<\epsilon_i$ for every $t=1,...,i$ (we can choose $N_{i+1}$ so that such $s\in [N_i,N_{i+1}]$ exists). Moreover by Remark \ref{den} we can choose $N_{i+1}$ so that for every $r=1,...,i$ there exists $s_r\in [N_i,N_{i+1}]$ with $r\nmid s_r$.
  
Notice first that for every $r\in \N$, $\lim_{n\to +\infty}\|s_n\alpha_r\|=0$.
Indeed, for every $j>r$ and every $t\in \N$ such that $s_t\in [N_j,N_{j+1}]$ we have $\|s_t\alpha_r\|<\epsilon_j$.

 Now, let $\theta\in \T$ be such that $s_n\theta[1]$ is not dense in $\T$. Then there exists $I\subset \T$, $|I|=\frac{1}{l_s}$ for some $s$ such that $s_n\theta[1]\notin I$. By the definition of the sequence $(s_n)_{n=1}^{+\infty}$ it follows that there exists $n_0$ such that $\theta\in \mathcal{A}(N_n,N_{n+1},I,\epsilon,n)$ for every $n\geq n_0$. Therefore, by Lemma \ref{lem} it follows that there are integers $k^n_1,...,k^n_n$ with $|k^n_i|<K_n$ for every $i=1,...,n$ such that
$\|\sum_{i=1}^nk^n_i\alpha_i-l'\theta\|<v_n$ for some $|l'|<L=L(l_s)$. Therefore, 
$\|\sum_{i=1}^nh^n_i\alpha_i-L!\theta\|<L!v_n$, for some numbers $h^n_1,...,h^n_n\in \N$ with $|h^n_i|<L!K_n$. It follows by triangle inequality that $\|\sum_{i=1}^{n+1}h^{n+1}_i\alpha_i-\sum_{i=1}^nh^n_i\alpha_i\|<L!v_n+L!v_{n+1}<2L!v_n$. By the definition of $v_n$, we get that there exists $n_1\in \N$ such that for $n\geq n_1$, these two combinations are equal. Therefore $|\sum_{i=1}^{n_1}h^{n_1}_i\alpha_i-L!\theta\|<L!v_n$ for every $n\geq n_1$. But $v_n\leq \frac{1}{n}\to 0$ and consequently $\theta\in \Q+\Q\alpha_1+...+\Q\alpha_{n_1}$.

On the other hand, it follows by construction of $(s_n)_{n\geq 1}$ that $(s_n\theta[1])_{n\geq 1}$ is not dense in $\T$  if $\theta\in \Q+\Q\alpha_1+...$.\end{proof}


\noindent {\it Proof of Theorem \ref{ext}.} \margem{I made some explanation.} For every $i \in \N$, let $\{s_n^{(i)}\}_{n\in \N}$ be a sequence as in Proposition \ref{inf} applied to the family of rationally independent numbers $\{\alpha_j\}_{j\in \N, j\neq i}\in \T$. Let $(N_s(i))_{s\geq 1}$ be the corresponding sequence of natural numbers given in the proof of Proposition \ref{inf}, that is  $\|s_t^{(i)} \alpha_r\|<\frac{1}{2(j+1)^2}$ for every $t\geq N_j(i)$ (this implies that $s_t>N_j(i)$) and every $r < j$.  Then define the sequence $\tilde{s}^{(i)}_n:=s^{(i)}_{n+N_i(i)}$

Then define $m_n$ to be the sequence $\tilde{s}_1^{(1)},\tilde{s}_2^{(1)},\tilde{s}_1^{(2)},\tilde{s}_3^{(1)},\tilde{s}_2^{(2)},\tilde{s}_1^{(3)},\tilde{s}_4^{(1)},\tilde{s}_3^{(2)},\tilde{s}_2^{(3)},\tilde{s}_1^{(4)},\ldots$. The sequence $m_n$ satisfies the conditions of Theorem \ref{ext}. Indeed, first note that for any irrational $\th$ there exists $i$ such that $\th \notin \bigcup_{i=1}^{+\infty}(\Q+...+\Q\alpha_{i-1}+\Q\alpha_{i+1}+...)$, hence $(m_n\theta[1])_{n\geq 1}$ is dense by just considering the subsequence $\tilde{s}_l^{(i)}$. 
Secondly fix $\epsilon>0$ and $k\in \N$. Let $r\in \N$ be such that $\frac{1}{2(r+1)^2}<\epsilon$. Define  $N_0:=(\max\{N_r(1),...,N_r(r)\})^2$.  Then, by definition of the sequence $(m_n)_{n\geq 1}$,
 $\|m_n\alpha_i\|<\frac{1}{2(r+1)^2}<\epsilon$, for every $n>N_0$ and every $i\in \{1,...,k\}$ except for at most one $i$ that satisfies $m_n=\tilde{s}_{l_n}^{(i)}$.  \hfill $\Box$

\begin{remark}\label{prem}\em{ It follows that for every $\epsilon>0, i\in \N$ there exist $n_0\in \N$ such that for every $n\geq n_0$, $\sum_{s=1}^i\|m_n\alpha_s\|<\frac{1}{2}+\epsilon$.
}
\end{remark}

\section{Proof of Theorem \ref{mes}.}

Fix rationally independent numbers $(\alpha_i)_{i\geq 1}\in \T$ and let $(m_n)_{n=1}^{+\infty}$ be the corresponding sequence given by Theorem \ref{ext}.\\
For the construction of the measure $\mu$ we will proceed similary to \cite{Fa-Tho} (and we borrow notation from there). For a probability measure $\nu$ on $\T$ we denote by $\nu^n=|\int_\T \|m_n\theta\| d\mu(\theta)|$. We will define inductively a sequence $(k_n)_{n\geq 1}$ so that the measure $\mu$ will be a weak limit of discrete measures $\mu_p:=\frac{1}{2^p}\sum_{i=1}^{2^p}\delta_{k_i\alpha_i}$ for some numbers $k_i\in \N$ such that there exists a sequence $(N_p)_{p=1}^{+\infty}$ for which 
\begin{enumerate}
    \item[(i)] For every $p\geq 1$, for every $j\in [1,p-1]$, for every $n\in [N_j,N_{j+1}]$, $\mu^n_p<\frac{1}{2^{j-1}}$ (for $j=0$ $\mu^n_p<1$).
	 \item[(ii)] For $p_0\in \N$ denote by $\eta_{p_0}=\frac{1}{4}\inf_{1\leq i<j\leq 2^{p_0}}\|k_i\alpha_i-k_j\alpha_j\|$. Then for every $l\in \N$ and every $r\in[1,2^{p_0}]$, $\|k_{l2^{p_0}+r}\alpha_{l2^{p_0}+r}-k_r\alpha_r\|<\eta_{p_0}$.
\end{enumerate}  
In fact, similarly to \cite{Fa-Tho}, we get that any weak limit $\mu$ of  a sequence  $\mu_p$ as above, satisfies the conclusion of Theorem \ref{mes}. Indeed, by (i) $\mu^n\to 0$.
By (ii) it follows that for each $p_0$, the intervals $I_r=[-\eta_{p_0}+k_r\alpha_r,\eta_{p_0}+k_r\alpha_r]$, $r=1,...,2^{p_0}$ are disjoint and $\mu_p(I_r)=\frac{1}{2^{p_0}}$ for every $p\geq p_0$ and therefore the limit measure $\mu$ is continuous. 

Therefore, we just have to construct the measures $\mu_p$ as in (i) and (ii). We will do an inductive construction, in which  we will additionally require that for every $p$
 \begin{equation}\label{add} \mu^n_p<\frac{1}{2^{p+1}}+\frac{1}{2^{p+3}} \;\;\text{for} \;n\geq N_p.
\end{equation}
 For $p=0$ let $k_1=1$, then $\mu$ is the Dirac measure at $\alpha_1$. Let $N_0=0$. For $p=1$, $k_2=1$, then $\mu_1$ is the average of Dirac measures at $\alpha_1$ and $\alpha_2$. We choose $N_1=1$.  This satisfies (i) and (\ref{add}) for $p=1$. \\
Assume that we have constructed $k_i$ for $i=1,...,2^p$, $N_l$ for $1\leq l\leq 2^p$ such that (i) and (\ref{add}) is satisfied up to $p$ and (ii) is satisfied for every $p_0\leq p$ and $0\leq l\leq 2^{p-p_0}-1$. We now choose $k_{2^p+1}$ so that $k_{2^p+1}\alpha_{2^p+1}$ is sufficiently close to $k_1\alpha_1$ so that 
$$\nu_{p,1}=\frac{1}{2^p} \sum_{i=1}^{2^p} \delta_{k_i\alpha_i}+\frac{1}{2^{p+1}}(\delta_{k_{2^p+1}}\alpha_{2^p+1}-\delta_{k_1\alpha_1})$$ satisfies $\nu^n_{p,1}<\frac{1}{2^{j-1}}$ for $n\in [N_j,N_{j+1}]$ and $j\in [0,p-1]$
($\nu^n_{p,1}=\mu_p^n+\frac{1}{2^{p+1}}(\|m_nk_{2^p+1}\alpha_{2^p+1}-m_nk_1\alpha_1\|)$). Moreover it follows that for $n\geq N_p$ we have $\nu^n_{p,1}<\mu^n_p+\frac{1}{2^{p+1}}<\frac{1}{2^{p+1}}+\frac{1}{2^{p+3}}+\frac{1}{2^{p+1}}<\frac{1}{2^{p-1}}$. Let $N_{p,1}>N_p$ be sufficiently large so that $\nu_{p,1}^n<\frac{1}{2^p}$ for $n\geq N_{p,1}$  ($\nu_{p,1}^n<\mu^n_p+\frac{1}{2^{p+1}}$ and $\mu^n_p$ can be arbitrary close to $\frac{1}{2^{p+1}}$ by Remark \ref{prem}). \\
Now construct idnuctively for $s=1,...,2^p$ the numbers $k_{2^p+s},N_{p,s}\in \N$ for the measures $\nu_{p,s}$ given by
$\nu_{p,s}=\mu_p+\frac{1}{2^{p+1}}(\sum_{i=1}^s(\delta_{k_{2^p+i}\alpha_{2^p+i}}-\delta_{k_i\alpha_i}))$.
It follows that by choosing $k_{2^p+s}$ so that $k_{2^p+s}\alpha_{2^p+s}$ is sufficiently close to $k_s\alpha_s$ and $N_{p,s}$ large enough, we can insure that

\begin{description}
    \item [A.]$\nu^n_{p,s}<\frac{1}{2^{j-1}}$ for every  $n\in [N_j,N_{j+1]}$, and $j\leq p-1$.
	 \item [B.]$\nu^n_{p,s}<\frac{1}{2^{p-1}}$ for $n\geq N_p$.
	 \item [C.]$\nu^n_{p,s}<\frac{1}{2^{p}}$ for $n\geq N_{p,s}$.
\end{description}
Indeed, for $s=1$ the above conditions are satisfied, assume that for some $s\geq 1$, they hold. We will prove that they hold for $s+1$. First note that $v_{p,s}-v_{p,s-1}=\frac{1}{2^{p+1}}(\delta_{k_{2^p+s}\alpha_{2^p+s}}-\delta_{k_s\alpha_s})$. Therefore by choosing $k_{2^p+s}$ so that $k_{2^p+s}\alpha_{2^p+s}$ is sufficienlty close to $k_s\alpha_s$ and by induction hypothesis, we get that $\nu^n_{p,s}<\frac{1}{2^{j-1}}$ for every $n\in [N_j,N_{j+1}]$ with $j\leq p-1$. 
The same arguments gives us $\nu^n_{p,s}<\frac{1}{2^{p-1}}$ for $N_{p,s-1}\geq n\geq N_p$. For $n>N_{p,s-1}$ we use the fact that $\nu^n_{p,s-1}<\frac{1}{2^p}$ to get $\nu^n_{p,s}<\nu^n_{p,s-1}+\frac{1}{2^{p+1}}<\frac{1}{2^p}+\frac{1}{2^p}=\frac{1}{2^{p-1}}$. 
For the third point we use the fact that for $n$ sufficiently large, $\|m_n\alpha_i\|$ is arbitrary small for all but one $i\in \{1,...,2^p+s\}$ (compare with Remark \ref{prem}), to get that for $N_{p,s}$ large enough, $\nu^n_{p,s}<\frac{1}{2^{p+1}}+\frac{1}{2^{p+2}}+\frac{1}{2^{p+2}}=\frac{1}{2^p}$, for $n\geq N_{p,s}$.
 
 Finally we define $\mu_{p+1}=\nu_{p,2^p}$ and observe that $\mu_{p+1}$ satisfies (i).  Moreover, by definition $\mu_{p+1}=\frac{1}{2^{p+1}}\sum_{i=1}^{2^{p+1}}\delta_{k_i\alpha_i}$ and using the properties of the sequence $(m_n)_{n\geq 1}$ ($\|m_n\alpha_i\|$ is arbitrary small for all but one $i=1,...,2^{p+1}$, see also Remark 7) we get that if $N_{p+1}$ is sufficiently large, then (\ref{add}) is satisfied for $\mu_{p+1}$.\\
\indent Moreover, for $l=2^{p-p_0}+l'-1$ we have $\|k_{l2^{p_0}+r}\alpha_{l2^{p_0}+r}-k_r\alpha_r\|\leq \|k_{l2^{p_0}+r}\alpha_{l2^{p_0}+r}-k_{l'2^{p_0}+r}\alpha_{l'2^{p_0}+r}\|+\|k_{l'2^{p_0}+r}\alpha_{l'2^{p_0}+r}-k_r\alpha_r\|<\eta_{p_0}$
By induction hypothesis and the choice of $k_{l2^{p_0}+r}$. Therefore (ii) is satisfied for $p+1$ and every $l\leq 2^{p+1}$. This finishes the proof. \hfill $\square$


\begin{thebibliography}{9}
\bibitem{Ad} T.\ Adams, {\em Tower multiplexing and slow weak mixing}, arXiv:1301.0791.
 \bibitem{BJLR} V. Bergelson, A. Del Junco, M. Lemanczyk, J. Rosenblatt, {\it   Rigidity and non-recurrence along sequences}, 	Ergodic Theory and Dynamical Systems, First View Article (2013), 1-39. 

\bibitem{Co-Fo-Si} I.P.\ Cornfield, S.V.\ Fomin, Ya.G. Sinai, {\em Ergodic Theory}, Springer-Verlag, New York, 1982.
\bibitem{Fa-Tho} B.\ Fayad, J-P.\ Thouvenot, {\em On the convergence to $0$ of $m_n\zeta[1]$.},  to appear in Acta Arithmetica. 
\end{thebibliography}
\end{document}